\newtheorem{theorem}{Theorem}[section]
\newtheorem{lemma}{Lemma}[section]
\newtheorem{cor}{Corollary}[section]
\newcommand{\Z}{{\mathbb Z}}
\newcommand{\C}{{\mathbb C}}
\newcommand{\R}{{\mathbb R}}
\begin{document}

\title[Polynomials with bounded height]{On the distribution of polynomials with bounded height}

\subjclass[2010]{11C08}
\keywords{Polynomials, height, signature, distribution}

\author[Csan\'ad Bert\'ok, Lajos Hajdu and Attila~Peth\H{o}]{Csan\'ad Bert\'ok, Lajos Hajdu$^*$ and Attila~Peth\H{o}$^{**}$}
\address{Institute of Mathematics\\ University of Debrecen \\
H-4002 Debrecen, P.O. Box 400, HUNGARY}
\email{hajdul@science.unideb.hu}
\email{bertok.csanad@science.unideb.hu}
\address{Department of Computer Science, University of Debrecen,
H-4002 Debrecen, P.O. Box 400, HUNGARY}
\email{Petho.Attila@inf.unideb.hu}
\thanks{$^*$Research supported in part by the NKFI grants K100339 and K115479.}
\thanks{$^{**}$Research supported in part by the NKFI grants K100339, NK104208 and K115479.}
\begin{abstract}
We provide an asymptotic expression for the probability that a randomly chosen polynomial with given degree, having integral coefficients bounded by some $B$, has a prescribed signature. We also give certain related formulas and numerical results along this line. Our theorems are closely related to earlier results of Akiyama and Peth\H{o}, and also yield extensions of recent results of Dubickas and Sha.
\end{abstract}

\date{\today}

\maketitle

\section{Introduction}


Let $d$ be a positive integer, $B\ge 1$ a real number. Denote by
$\mathcal H_d(B)$ the set of $(d+1)$-dimensional vectors $(p_0,\dots,p_d)$ satisfying $|p_i|\le B$ $(0\leq i\leq d)$. In the case $B=1$ we write simply $\mathcal H_d$ instead of $\mathcal H_d(1)$.

Given a polynomial $P \in \R[X]$,
the non-real roots of $P$ appear in complex conjugate pairs.
Thus $d=r+2s$, where $r$ denotes the number of real roots and $s$
the number of non-real pairs of roots of $P$.
As we shall work with arbitrary but fixed $d$ and then $r$ is uniquely determined by $s$, we call $s$ the signature of $P$.
The set $\mathcal H_d(B)$ splits naturally into $\lfloor d/2 \rfloor +1$ disjoint subsets
according to the signature. In the sequel $\mathcal H_d(s,B)$ denotes
the subset of $\mathcal H_d(B)$ whose elements have signature $s$. If $B=1$, in place of $\mathcal H_d(s,1)$ we shall simply write $\mathcal H_d(s)$. Plainly, $\mathcal H_d(s,B)$ is a bounded set in $\R^{d+1}$ for any $B>0$, and we will prove that it is Lebesgue measurable. For the Lebesgue measure (which we shall often simply call volume) of $A\subset \R^n$ we write $\lambda_n(A)$ or $\lambda(A)$, if the dimension $n$ is obvious.

Following Dubickas and Sha \cite{DuSh} denote by $\mathcal D_d^*(s,B)$\footnote{In fact Dubickas and Sha \cite{DuSh} called $(r,s)$ the signature of $P$ and used the notation $\mathcal D_d^*(s,B)$ instead of $\mathcal D_d^*(s,B)$. As we frequently cite the papers of Akiyama and Peth\H{o} \cite{AkiPet} and \cite{AkiPet2}, where only $s$ was used for the signature and sets of polynomials were denoted according to this convention, we follow their notation.} the set of polynomials $f(X)=p_dX^d +p_{d-1}X^{d-1}+\dots + p_0 \in \Z[X]$ with $p_d\not= 0$, $|p_i|\le B$ $(i=0,\dots,d)$ and such that $f$ has signature $s$. That is, $\mathcal D_d^*(r,s;B)=\mathcal H_d(s,B)\cap \mathbb Z[x]$. Denote by $D_d^*(s,B)$ the number of elements of $\mathcal D_d^*(s,B)$. They proved
\begin{equation}\label{DubSha}
B^{d+1} \ll D_d^*(s,B) \ll B^{d+1}
\end{equation}
by using a lower bound for the number of integer polynomials approximating appropriately a real polynomial of degree $d$ and signature $s$. They wrote: "It would be of interest to obtain an asymptotic formula as (1.1) in our setting as well." 

In this paper we improve considerably \eqref{DubSha} by providing an asymptotic formula for $D_d^*(s,B)$, thus we fulfill their request. It is important to mention that Akiyama and Peth\H{o} \cite{AkiPet, AkiPet2} considered a similar problem, when instead of the absolute values of the coefficients of the polynomials, the absolute values of the roots of the polynomials are assumed to be bounded. Our method works for other height functions too. For its application it is sufficient to prove that the boundary of the set of polynomials of height at most $B$ is a smooth function, see Lemma \ref{dave:thm}. Moreover one needs that the volume of the sets of polynomials with given signature of degree $d$ and of height $B$ is $\gg B^d$, see an example in the last section.

We also give a formula for $\lambda(\mathcal H_d(s,B))$ for any $d,s$ and $B$, involving integrals. Our formulas are similar to those obtained by Akiyama and Peth\H{o} \cite{AkiPet, AkiPet2}. Akiyama and Peth\H{o} could handle the integrals occurring there by Selberg integrals, and gave the precise volumes of the corresponding sets for small values of $d$. In our case, unfortunately we cannot handle the integrals theoretically, except certain 'small' cases. To get some numerical results we apply the Monte Carlo method to approximate the occurring integrals for $d\leq 15$.

The structure of the paper is the following. In the next section we give our theoretical results. Then we prove our theorems. In the fourth section our numerical results are given for $d\leq 15$. Finally, we indicate some open problems.

\section{New results}

Our main result is the following.

\begin{theorem} \label{main}
  We have
  $$
  D_d(s,B) = \lambda_{d+1}(\mathcal H_d(s)) B^{d+1} + O(B^{d}).
  $$
Moreover, $\lambda_{d+1}(\mathcal H_d(s))>0$ for all $d$ and $s$.
\end{theorem}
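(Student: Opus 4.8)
The plan is to treat the two assertions separately, deriving the asymptotic count from a lattice-point estimate and the positivity from an explicit construction. I begin with the observation that the signature of a polynomial is unchanged when all its coefficients are multiplied by a fixed positive scalar, since this does not alter the roots. Consequently $\mathcal H_d(s,B)=B\cdot\mathcal H_d(s)$ as subsets of $\R^{d+1}$, where $B\cdot$ denotes dilation by the factor $B$, and hence $\lambda_{d+1}(\mathcal H_d(s,B))=B^{d+1}\lambda_{d+1}(\mathcal H_d(s))$ (the coordinate hyperplane $p_d=0$, where the signature is not defined in the degree-$d$ sense, is a null set and does not affect the volume). Up to the contribution of this hyperplane, $D_d(s,B)$ is the number of points of $\Z^{d+1}$ lying in $\mathcal H_d(s,B)$, so the task reduces to comparing this lattice count with the volume of the dilated region.

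To carry out the comparison I would invoke the Lipschitz principle of Davenport: for a bounded region in $\R^{n}$ whose boundary is suitably regular, the number of integer points differs from the volume by an amount controlled by the $m$-dimensional measures ($0\le m\le n-1$) of the projections onto coordinate subspaces, provided axis-parallel lines meet the region in a uniformly bounded number of intervals. The required regularity is exactly the smoothness of the boundary $\partial\mathcal H_d(s)$ furnished by Lemma \ref{dave:thm}: the underlying reason is that $\mathcal H_d(s)$ is semialgebraic, the number of real roots being governed by sign conditions on the discriminant and the associated subresultants, so its boundary lies on a low-degree algebraic hypersurface; smoothness then gives both the Lipschitz parametrization and the bounded-intersection property. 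Applying the principle to $\mathcal H_d(s,B)=B\cdot\mathcal H_d(s)$, whose $m$-dimensional projections have measure $O(B^m)$, the error is dominated by the top term $m=d$, yielding
$$
D_d(s,B)=\lambda_{d+1}(\mathcal H_d(s,B))+O(B^{d})=\lambda_{d+1}(\mathcal H_d(s))\,B^{d+1}+O(B^{d}).
$$
The discarded lattice points with $p_d=0$ form a $d$-dimensional slab, number $O(B^{d})$, and are absorbed into the error term.

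For the positivity it suffices to exhibit a nonempty open subset of $\mathcal H_d(s)$, since any such set has positive Lebesgue measure. I would fix a root multiset realizing the prescribed signature, consisting of $r=d-2s$ distinct real numbers together with $s$ conjugate pairs $\alpha_j\pm i\beta_j$ with $\beta_j>0$ and pairwise distinct, all of very small absolute value, and take leading coefficient $p_d=\tfrac12$. Expanding $P(X)=p_d\prod_k (X-\rho_k)$, every elementary symmetric function—and hence every coefficient—can be forced below $1$ in absolute value by taking the roots sufficiently close to $0$, so the coefficient vector lies in the open cube $(-1,1)^{d+1}$. Since all the roots are simple and of the prescribed type and depend continuously on the coefficients, a simple real root remains real and a simple non-real root remains non-real under small perturbation; thus the signature is locally constant at this point. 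An entire open neighborhood of the coefficient vector therefore lies in $\mathcal H_d(s)$, giving $\lambda_{d+1}(\mathcal H_d(s))>0$.

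The main obstacle is the regularity input for the Lipschitz principle: one must ensure that $\partial\mathcal H_d(s)$ is genuinely smooth (locally the graph of a smooth function of the remaining coefficients) and that axis-parallel lines cut it in a boundedly many points, so that the error is truly $O(B^{d})$ rather than something larger. This is precisely the role of Lemma \ref{dave:thm}, and it is where the semialgebraic, discriminant-based description of the signature does the essential work. By contrast, the scaling identity and the positivity argument are routine consequences of the continuity of roots.
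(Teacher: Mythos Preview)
Your overall strategy coincides with the paper's: use the scaling $\mathcal H_d(s,B)=B\cdot\mathcal H_d(s)$ and apply Davenport's lattice-point principle, with the key input being that the boundary of $\mathcal H_d(s)$ lies on finitely many algebraic hypersurfaces (the discriminant locus $D=0$, the faces of the cube $[-1,1]^{d+1}$, and the hyperplane $p_d=0$). Two small corrections are worth making. First, in the paper Lemma~\ref{dave:thm} \emph{is} Davenport's theorem, not a regularity statement; the algebraic description of $\partial\mathcal H_d(s)$ is established separately in Lemma~\ref{halmaz} and Corollary~\ref{halmazc}. Second, you should drop the word ``smooth'': the discriminant hypersurface has genuine singularities, and Davenport's hypothesis is not smoothness but the bounded-intersection property for axis-parallel lines. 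That property follows directly from the algebraic degree---substituting a line parametrization into $D=0$ gives a univariate polynomial of bounded degree, hence boundedly many zeros---and the paper makes this explicit, bounding the number of intervals by $d(d+1)+1$.

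Your positivity argument is more elementary than the paper's and entirely sound. The paper deduces $\lambda_{d+1}(\mathcal H_d^+(s))>0$ by embedding a copy of Akiyama--Peth\H{o}'s region $\mathcal E_d^{(s)}(4^{-d})$ and invoking their earlier positivity result; your direct construction of a polynomial with small simple roots of the prescribed type, together with the observation that the signature is locally constant where the discriminant is nonzero, avoids that external dependence.
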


In our proof we follow closely the ideas of Akiyama and Peth\H{o} \cite{AkiPet2}. Our main tool is a classical result of Davenport \cite{D1}, which quantifies the ancient principle that if we blow up by a factor $B$ a $d$-dimensional set with appropriate properties then the number of lattice points is approximately $B^d$ times the volume of the original set.

Our further aim is to derive a formula for the volume of $\mathcal H_d(s,B)$. For this purpose we need some preparation. Denote by $S_j(x_1,\ldots,x_d)$ $(j=1,\ldots,d)$ the $j$-th elementary symmetric polynomial of $x_1,\ldots,x_d$, that is
$$S_j(x_1,\ldots,x_d)=\sum_{1\le i_1<\cdots <i_j\le d} x_{i_1}\cdots x_{i_j}.$$
For later use we define $S_0(x_1,\ldots,x_d)=1$. For $B>0$ let $H_d(s,B)$ denote the set of such $d$-dimensional real vectors $(x_1,\ldots,x_r,y_1,\ldots,y_s,z_1,\ldots,z_s)$ which satisfy the inequalities
$$-B\le S_j(x_1,\ldots,x_r,y_1+iz_1,y_1-iz_1,\ldots,y_s+iz_s,y_s-iz_s)\le B\ (1\le j\le d)$$
and $z_j\not= 0$ $(j=1,\dots,s)$, where $i=\sqrt{-1}$. If $B=1$, we simply write $H_d(s)$ for this set.

Obviously, we have $(p_0,\dots,p_d)\in \mathcal H_d(s,B)$ if and only if the vector $(x_1,\ldots,x_r,y_1,\ldots,y_s,z_1,\ldots,z_s)$ belongs to $H_d(s,B)$, where $x_1,\dots,x_r,y_1\pm z_1i, y_s\pm z_si$ are the roots of $p_dX^d +\dots+p_0$.

Denote by Res$(P(X),Q(X))$ the resultant of $P(X),Q(X)\in\mathbb{R}[X]$. For any possible $s$ and positive real number $B$ put
$$
\mathcal H_d^*(s,B):=\{(p_0,\dots,p_{d-1})\in{\mathbb R}^d\ :\ (p_0,\dots,p_{d-1},1)\in \mathcal H_d(s,B)\},
$$
and
$$
\mathcal H_d^+(s,B):=\{(p_0,\dots,p_d)\in \mathcal H_d(s,B)\ :\ p_d>0\}.
$$
When $B=1$, the above sets are simply denoted by $\mathcal H_d^*(s)$ and $\mathcal H_d^+(s)$, respectively.

By the above notion, we have the following theorem.
\begin{theorem}\label{thm:2}
Let $R_j(X)=X^2-2y_jX+y^2_j+z^2_j$ $(j=1,\ldots,s)$. Then
	$$\lambda_d(\mathcal H_d^*(s,B))=\frac{2^s}{r!s!}\int\limits_{H_d(s,B)}|\Delta_r|\Delta_s\Delta_{r,s}\prod_{j=1}^{s}|z_j|\ dx_1\ldots dx_rdy_1dz_1\ldots dy_sdz_s,$$
	where
	\begin{align*}
	\Delta_r&=\prod_{1\le j<k\le r} (x_j-x_k),\\
	\Delta_s&=\prod_{1\le j<k\le s} {\rm{Res}}(R_j(X),R_k(X)),\\
	\Delta_{r,s}&=\prod_{j=1}^{r}\prod_{k=1}^{s}R_k(x_j).
	\end{align*}
	
	Furthermore, we have
	$$\lambda_{d+1}(\mathcal H_d^+(s,B))=B^{d+1}\int\limits_{0}^{1}u^d\lambda\left(\mathcal H_d^*\left(s,\frac{1}{u}\right)\right)du.$$
\end{theorem}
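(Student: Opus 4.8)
The plan is to prove both formulas via the classical change of variables sending the coefficients of a monic polynomial to its roots, expressed through the real parametrization by the $x_i$ and the pairs $(y_j,z_j)$.

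For the first formula, fix the signature $s$ (so $r=d-2s$) and consider the map
$$\Phi:(x_1,\dots,x_r,y_1,z_1,\dots,y_s,z_s)\longmapsto(p_0,\dots,p_{d-1})$$
defined by $\prod_{i=1}^r(X-x_i)\prod_{j=1}^s R_j(X)=X^d+p_{d-1}X^{d-1}+\dots+p_0$. Since the coefficients of the monic polynomial are the signed elementary symmetric functions of the roots, the defining inequalities $-B\le S_j\le B$ of $H_d(s,B)$ are exactly $|p_{d-j}|\le B$, so $\Phi$ maps $H_d(s,B)$ onto $\mathcal H_d^*(s,B)$. On the open, dense, full-measure locus where the roots are distinct, $\Phi$ is a local diffeomorphism whose fibres consist of exactly $r!\,s!\,2^s$ points: the $r!$ orderings of the real roots, the $s!$ orderings of the conjugate pairs, and the $2^s$ sign choices for the $z_j$ (as $z_j\mapsto-z_j$ merely swaps $w_j=y_j+iz_j$ with $\overline{w_j}$). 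The change of variables formula, accounting for this multiplicity, then yields
$$\lambda_d(\mathcal H_d^*(s,B))=\frac{1}{r!\,s!\,2^s}\int_{H_d(s,B)}|\det D\Phi|\,dx_1\cdots dz_s.$$

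It remains to evaluate $|\det D\Phi|$, and here I would factor $\Phi$ through the holomorphic coefficient map $\sigma:(\alpha_1,\dots,\alpha_d)\mapsto(p_0,\dots,p_{d-1})$, whose Jacobian is the Vandermonde $\pm\prod_{1\le k<l\le d}(\alpha_k-\alpha_l)$, followed by the substitution $\alpha_i=x_i$ and $\{\alpha,\alpha'\}=\{w_j,\overline{w_j}\}$. Grouping the pairs of roots by type produces exactly the three blocks of the statement, since $R_k(x_j)=(x_j-w_k)(x_j-\overline{w_k})$ handles the real–complex pairs, $\mathrm{Res}(R_j,R_k)$ the pairs drawn from two different conjugate pairs, and $w_j-\overline{w_j}=2iz_j$ the two roots inside a single pair:
$$\prod_{k<l}(\alpha_k-\alpha_l)=\Delta_r\cdot\Delta_{r,s}\cdot\Delta_s\cdot\prod_{j=1}^s(w_j-\overline{w_j}).$$
Taking absolute values gives $2^s|\Delta_r|\Delta_s\Delta_{r,s}\prod_j|z_j|$ (one checks $\Delta_s,\Delta_{r,s}>0$ automatically). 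Passing from the complex coordinates $(w_j,\overline{w_j})$ back to $(y_j,z_j)$ multiplies each pair's contribution by the modulus of the Jacobian of $(y,z)\mapsto(y+iz,y-iz)$, namely $2$, contributing a further factor $2^s$. Hence $|\det D\Phi|=4^s|\Delta_r|\Delta_s\Delta_{r,s}\prod_j|z_j|$, and substituting this into the integral while simplifying $4^s/(r!\,s!\,2^s)=2^s/(r!\,s!)$ proves the first formula.

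For the second formula I would slice $\mathcal H_d^+(s,B)$ by the value $t=p_d\in(0,B]$ of the leading coefficient. The signature of $tX^d+p_{d-1}X^{d-1}+\dots+p_0$ equals that of its monic normalization $X^d+(p_{d-1}/t)X^{d-1}+\dots+(p_0/t)$, and $|p_i|\le B$ is equivalent to $|p_i/t|\le B/t$, so the $t$-slice is precisely the scaled set $t\cdot\mathcal H_d^*(s,B/t)$, of $d$-dimensional measure $t^d\lambda_d(\mathcal H_d^*(s,B/t))$. Fubini's theorem then gives
$$\lambda_{d+1}(\mathcal H_d^+(s,B))=\int_0^B t^d\,\lambda_d\!\left(\mathcal H_d^*\!\left(s,\tfrac{B}{t}\right)\right)dt,$$
and the substitution $t=Bu$ turns this into $B^{d+1}\int_0^1 u^d\,\lambda(\mathcal H_d^*(s,1/u))\,du$, as claimed; convergence is clear from $u^d\lambda_d(\mathcal H_d^*(s,1/u))\le u^d(2/u)^d=2^d$. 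The main obstacle is the Jacobian evaluation in the first part: organizing the Vandermonde by root type and, above all, correctly tracking the two independent sources of powers of two—the factor $2iz_j$ from $w_j-\overline{w_j}$ and the factor $2$ from the real-to-complex coordinate change—together with the $r!\,s!\,2^s$ overcounting, so that the constants collapse to the stated $2^s/(r!\,s!)$. The slicing argument for the second formula is by comparison routine.
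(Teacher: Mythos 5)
Your proof is correct; the interesting point is how it divides the labor compared with the paper. For the second formula your argument is essentially the paper's: the paper performs the single substitution $p_d=Bq_d$, $p_i=Bq_dq_i$ $(i=0,\dots,d-1)$, whose Jacobian determinant is $B^{d+1}q_d^d$, and then applies Fubini; this is exactly your slicing by $t=p_d$, identification of each slice with $t\cdot\mathcal H_d^*(s,B/t)$, and substitution $t=Bu$, merely organized as one $(d+1)$-dimensional change of variables instead of slice-then-scale. The genuine difference is in the first formula: the paper dispatches it in one sentence, saying it follows ``by a simple calculation'' from Theorem 2.1 of \cite{AkiPet}, i.e.\ it outsources the root-to-coefficient change of variables to Akiyama and Peth\H{o}, whereas you prove it from scratch. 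Your Jacobian bookkeeping is the correct one: factoring through the holomorphic coefficient map gives the Vandermonde, which splits by root type as $|\Delta_r|\cdot\Delta_{r,s}\cdot\Delta_s\cdot\prod_{j}|w_j-\overline{w_j}|$, the last factor contributing $2^s\prod_j|z_j|$; the real-to-complex coordinate change $(y_j,z_j)\mapsto(w_j,\overline{w_j})$ contributes another $2^s$; and the fibre count $r!\,s!\,2^s$ (orderings of the real roots, orderings of the conjugate pairs, signs of the $z_j$) divides it all out, so that $4^s/(r!\,s!\,2^s)=2^s/(r!\,s!)$ lands exactly on the constant in the statement --- a reassuring consistency check, since positivity of $\Delta_{r,s}=\prod_{j,k}\bigl((x_j-y_k)^2+z_k^2\bigr)$ and of $\Delta_s$ also explains why the absolute value is needed only on $\Delta_r$. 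What your route buys is a self-contained proof, independent of the reference; what the paper's buys is brevity, at the cost of leaving to the reader precisely the computation you made explicit (adapting the Akiyama--Peth\H{o} change-of-variables formula to the box constraints $|S_j|\le B$ defining $H_d(s,B)$).
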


We note that by Theorem \ref{main} we know that $\lambda_{d+1}(\mathcal H_d(s,B))$ exists for any $B>0$. Further, in view of $\lambda_{d+1}(\mathcal H_d(s,B))=2\lambda_{d+1}(\mathcal H_d^+(s,B))$ (see Corollary \ref{halmazc} below), the above theorem gives a formula (though implicit) for $\lambda_{d+1}(\mathcal H_d(s,B))$, for any $B>0$.

\section{Proofs}

In this section we prove our theorems. First we investigate
$\mathcal{H}_d^+(s,B)$ for $B>0$. Later we also need to consider the set $\mathcal{H}_d^-(s,B)$, which is the set of vectors $\bf v$, such that $-{\bf v} \in \mathcal{H}_d^+(s,B)$.

\begin{lemma} \label{halmaz}
   The set $\mathcal{H}_d^+(s)$ has positive Riemann measure and its boundary is the union of finitely many algebraic surfaces.
\end{lemma}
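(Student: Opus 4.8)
The plan is to describe $\mathcal H_d^+(s)$ as a region cut out of the open cube by a single polynomial condition, and to read off both assertions from this description. Write $P_{\mathbf p}(X) = p_d X^d + \cdots + p_0$ for the polynomial attached to $\mathbf p = (p_0,\dots,p_d)$, and let $\mathrm{disc}(\mathbf p)$ denote the discriminant of $P_{\mathbf p}$, which is a polynomial in $p_0,\dots,p_d$. First I would record the key fact that the signature $s$ is a locally constant function of $\mathbf p$ on the open set $\{p_d > 0,\ \mathrm{disc}(\mathbf p) \ne 0\}$: the roots of $P_{\mathbf p}$ depend continuously on $\mathbf p$ as an unordered multiset, and since $\mathrm{disc}(\mathbf p)\ne 0$ keeps all roots simple, a non-real root can turn into a real one only by colliding with its complex conjugate, which would force a real double root and hence $\mathrm{disc}(\mathbf p)=0$. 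Thus the number $r$ of real roots, and with it $s$, cannot change along a path avoiding the discriminant locus.

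Granting this, $\mathcal H_d^+(s) \cap \{\mathrm{disc}(\mathbf p)\ne 0\}$ is a union of connected components of the open set $\{\, |p_i|<1\ (0\le i\le d),\ p_d>0,\ \mathrm{disc}(\mathbf p)\ne 0 \,\}$, and is therefore open. Consequently every boundary point of $\mathcal H_d^+(s)$ must lie either on one of the bounding hyperplanes $p_i = \pm 1$ $(0\le i\le d)$, or on the face $p_d = 0$, or on the discriminant hypersurface $\{\mathrm{disc}(\mathbf p)=0\}$. Each of these is the zero locus of a single nonzero polynomial in $p_0,\dots,p_d$, so
\[
\partial \mathcal H_d^+(s) \subseteq \{p_d = 0\} \cup \bigcup_{i=0}^{d}\{p_i = \pm 1\} \cup \{\mathrm{disc}(\mathbf p) = 0\},
\]
a finite union of algebraic surfaces, which proves the second assertion.

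Because each of these surfaces has Lebesgue measure zero, the boundary $\partial \mathcal H_d^+(s)$ has measure zero, so $\mathcal H_d^+(s)$ is Riemann (Jordan) measurable. It remains to see that its measure is positive, which I would do by exhibiting an interior point. Choose $r = d - 2s$ distinct real numbers and $s$ distinct complex-conjugate pairs, all of modulus small enough that every elementary symmetric function of this root multiset is strictly smaller than $1$ in absolute value; then the monic polynomial with these roots has coefficient vector $\mathbf p^0$ satisfying $|p_i^0|<1$, $p_d^0 = 1 > 0$, signature $s$, and $\mathrm{disc}(\mathbf p^0)\ne 0$. By the openness established above, a whole neighbourhood of $\mathbf p^0$ lies in $\mathcal H_d^+(s)$, so $\lambda_{d+1}(\mathcal H_d^+(s))>0$.

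The main obstacle is the first step: the rigorous proof that the signature changes only across the discriminant locus. This rests on the continuous (indeed analytic, off the discriminant) dependence of the roots on the coefficients, together with the topological claim that a conjugate pair can leave the complex plane only through a real multiple root; making this precise requires a careful continuity argument for the root map. One should also verify that $\mathrm{disc}(\mathbf p)$ is a genuinely nonvanishing polynomial in the $p_i$, so that its zero set is a proper hypersurface, and check that the degenerate stratum $p_d = 0$, excluded from the interior by the condition $p_d>0$, enters only as a boundary face.
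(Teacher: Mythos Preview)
Your argument is essentially correct and takes a more direct route than the paper. The paper does not argue from scratch: it invokes earlier results of Akiyama and Peth\H{o} on the sets $\mathcal E_d^{(s)}(B)$ of monic polynomials with all roots in a disc, passes from $P_{\mathbf p}$ to a monic polynomial via the substitution $Y=p_dX$, and then reads off measurability and the boundary description from Lemma~2.1 of \cite{AkiPet2} and Theorem~7.1 of \cite{AkiPet}. Positivity is obtained there by bounding $\lambda_{d+1}(\mathcal H_d^+(s))$ from below by a constant times $\lambda_d(\mathcal E_d^{(s)}(4^{-d}))$. Your approach, by contrast, works entirely inside the coefficient cube: local constancy of the signature off the discriminant locus gives the boundary inclusion directly, and an explicit interior point gives positivity. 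This is more self-contained and arguably cleaner; the paper's route has the advantage of reusing machinery already set up for the bounded-root problem.

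There is one slip in your positivity argument. Your candidate point has $p_d^0=1$, which lies on the face $p_d=1$ of the closed cube $[-1,1]^{d+1}$; no full $(d{+}1)$-dimensional neighbourhood of $\mathbf p^0$ is then contained in $\mathcal H_d^+(s)$, so $\mathbf p^0$ is not an interior point. The fix is immediate: replace $\mathbf p^0$ by $\tfrac12\,\mathbf p^0=(\tfrac12 p_0^0,\dots,\tfrac12 p_{d-1}^0,\tfrac12)$, which corresponds to $\tfrac12 P_{\mathbf p^0}(X)$ and hence has the same signature and discriminant sign, but now lies in the open cube with $0<p_d<1$. With this adjustment your argument goes through.
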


\begin{proof}
Following Akiyama and Peth\H{o} \cite{AkiPet2}, denote $\mathcal E_d^{(s)}(B)$ $(s=0,\dots,\lfloor d/2 \rfloor)$ the set of vectors $(p_{0},\dots,p_{d-1})\in \R^d$ such that the corresponding polynomial $X^d+p_{d-1}X^{d-1}+\dots+p_0$ has signature $s$, and all of its roots lie in the disc of radius $B$.

Let $P(X) = p_dX^d + p_{d-1}X^{d-1}+\dots+p_0$ with $0<p_d\le 1$, $|p_j|\le 1$, $(j=0,\dots,d-1)$ and with signature $s$. The mapping $Y=p_d X$ is continuous, and we have $p^{d-1}P(X) = Q(Y) = Y^d + p_{d-1}Y^{d-1}+p_{d-2}p_dY^{d-2}+\dots+p_0p_d^{d-1}$, moreover the signatures of $P(X)$ and $Q(Y)$ are equal. By Proposition 2.5.9. of \cite{MiSt} all roots of $Q(Y)$ lie in the disc of radius $2$. Thus $(p_0p_d^{d-1},\dots,p_{d-2}p_d,p_{d-1}) \in \mathcal F_d^{(s)} (p_d)$, where
$$\mathcal F_d^{(s)} (p_d) := \mathcal E_d^{(s)} (2)\cap ([-p_d^{d-1},p_d^{d-1}]\times  \dots \times [-p_d,p_d]\times [-1,1]).$$
By Lemma 2.1 of \cite{AkiPet2}, $\mathcal E_d^{(s)}(B)$ is Riemann measurable for any $B>0$, thus $\mathcal F_d^{(s)} (p_d)$ is Riemann measurable, as well. Denote by $F_d^{(s)} (p_d)$ its $d$-dimensional Riemann measure. The function $F_d^{(s)} (p_d)$ is continuous for $p_d>0$, because $\mathcal E_d^{(s)} (2)$ is independent of $p_d$, and its boundary is by Theorem 7.1. of \cite{AkiPet} the union of finitely many algebraic surfaces. Also, the box $[-p_d^{d-1},p_d^{d-1}]\times  \dots \times [-p_d,p_d]\times [-1,1]$ depends continuously on $p_d$. Thus we have
$$
\lambda_{d+1}(\mathcal H_d^+(s)) = \lim_{p_d \to 0}\int\limits_{p_d}^1 p_d^{d(d-1)/2}F_d^{(s)}(p_d) dp_d.
$$
As $F_d^{(s)} (p_d)$ is continuous for $p_d>0$, this integral exists.

Now we prove that $\lambda_{d+1}(\mathcal H_d^+(s))$ is positive. For this purpose, assume $1/2\le p_d\le 1$ in the rest of this proof. (The argument works with any positive lower bound for $p_d$, but to prove our claim the choice $1/2$ is sufficient.) Assume that $q_0,\dots, q_{d-1}$ are so small that all roots of $Q(Y)=Y^d +q_{d-1}Y^{d-1}+\dots+q_0$ lie in the disc with radius $4^{-d}$. Then it is an easy exercise to show, that $|q_j| \le 2^{-d+j+1}\le p_d^{-d+j+1}$ $(j=0,\dots,d-1)$. Thus the inverse image of $\mathcal E_d^{(s)}(4^{-d})$ lie in $\mathcal H_d^+(s)$. Thus $F_d^{(s)} (p_d)\ge \lambda_d(\mathcal E_d^{(s)}(4^{-d}))>0$ for all $p_d\ge 1/2$. Thus
$$
\lambda_{d+1}(\mathcal H_d^+(s)) \ge \frac12 \left(\frac12\right)^{d(d-1)/2} \lambda_d(\mathcal E_d^{(s)}(4^{-d})),
$$
which is certainly a positive number.

Let $p_d X^d +p_{d-1}X^{d-1}+\dots+p_0$ be a polynomial with indeterminate coefficients lying in a commutative ring. Then its discriminant $D= D(p_0,\dots,p_d)$ is a homogenous polynomial in $p_0,\dots,p_d$ of degree $d(d-1)$. Specializing the coefficient domain to $\C$ it is well-known that $D=0$ if and only if either $p_d=0$, or $p_d\not= 0$ and the polynomial has multiple roots. Using the later fact Akiyama and Peth\H{o} \cite{AkiPet} proved, see Theorem 7.1., that the inner boundary points of $\mathcal E_d^{(s)}(1)$ lie on the hypersurface $S_D$ defined by the equation $D=0$. Repeating that proof to $\mathcal{H}_d^+(s)$, one can see that its boundary is the union of finitely many pieces of $S_D$ and the intersection of the hyperplane $p_d=0$ with the hypercube $[-1,1]^{d+1}$.
\end{proof}

\begin{cor} \label{halmazc}
  Let $B>0$. Then $\mathcal{H}_d^+(s,B)$ and $\mathcal{H}_d^-(s,B)$ have positive Riemann measure and their boundaries are the union of finitely many algebraic surfaces. Moreover
  $$
  \lambda(\mathcal{H}_d^+(s,B)) = \lambda(\mathcal{H}_d^-(s,B)) =\lambda(\mathcal{H}_d^+(s)) B^{d+1}.
  $$
\end{cor}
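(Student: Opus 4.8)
The plan is to deduce the corollary from Lemma~\ref{halmaz} by exploiting two elementary scaling and symmetry properties of the sets involved. First I would establish the scaling relation $\mathcal{H}_d^+(s,B) = B\cdot \mathcal{H}_d^+(s)$, where $B\cdot A$ denotes the dilation $\{B\mathbf{v} : \mathbf{v}\in A\}$. This is immediate from the definition: if $(p_0,\dots,p_d)$ has signature $s$ and satisfies $|p_j|\le 1$ with $p_d>0$, then $(Bp_0,\dots,Bp_d)$ has exactly the same roots (scaling all coefficients of a polynomial by a common positive factor does not change the roots, hence preserves the signature $s$) and satisfies $|Bp_j|\le B$ with $Bp_d>0$; conversely every element of $\mathcal{H}_d^+(s,B)$ arises this way. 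Since dilation by $B$ in $\R^{d+1}$ multiplies Lebesgue (and Riemann) measure by $B^{d+1}$, and maps algebraic surfaces to algebraic surfaces, Lemma~\ref{halmaz} immediately gives that $\mathcal{H}_d^+(s,B)$ has positive Riemann measure, has boundary equal to a finite union of algebraic surfaces, and satisfies
$$
\lambda(\mathcal{H}_d^+(s,B)) = B^{d+1}\lambda(\mathcal{H}_d^+(s)).
$$

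Next I would handle $\mathcal{H}_d^-(s,B)$ via the reflection $\mathbf{v}\mapsto -\mathbf{v}$. By its definition in the text, $\mathcal{H}_d^-(s,B)$ is precisely the image of $\mathcal{H}_d^+(s,B)$ under this central reflection, which is an isometry of $\R^{d+1}$ and therefore preserves Riemann measure and carries algebraic surfaces to algebraic surfaces. Hence $\mathcal{H}_d^-(s,B)$ inherits positive Riemann measure and a boundary that is a finite union of algebraic surfaces, and
$$
\lambda(\mathcal{H}_d^-(s,B)) = \lambda(\mathcal{H}_d^+(s,B)) = \lambda(\mathcal{H}_d^+(s))B^{d+1},
$$
which completes the chain of equalities in the statement.

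There is essentially no deep obstacle here, since the corollary is a routine consequence of the lemma under scaling and reflection; the only point requiring a little care is checking that both operations genuinely preserve the \emph{signature} $s$ and the \emph{algebraicity} of the boundary. For the signature one should note explicitly that replacing $P(X)$ by $cP(X)$ for $c\neq 0$ leaves the root multiset unchanged, while $\mathbf{v}\mapsto -\mathbf{v}$ corresponds to $P(X)\mapsto -P(X)$, again leaving the roots (and hence $r$ and $s$) fixed; the constraint $p_d>0$ is exactly what is flipped to $p_d<0$, accounting for the passage from $\mathcal{H}_d^+$ to $\mathcal{H}_d^-$. For the algebraicity of the boundary, one observes that a linear change of coordinates (dilation or reflection) sends the zero set of a polynomial equation to the zero set of another polynomial equation, so the boundary description from Lemma~\ref{halmaz} is preserved in form. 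I would therefore present the argument compactly, stating the dilation and reflection identities, invoking the linear-change-of-variables behaviour of Lebesgue measure and of algebraic surfaces, and reading off the three asserted properties directly from Lemma~\ref{halmaz}.
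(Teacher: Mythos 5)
Your proposal is correct and takes essentially the same approach as the paper: its entire proof consists of invoking Lemma~\ref{halmaz} together with the scaling fact that $(x_0,\dots,x_d)\in\mathcal{H}_d^+(s)$ if and only if $(Bx_0,\dots,Bx_d)\in\mathcal{H}_d^+(s,B)$, which is exactly your dilation identity. Your explicit treatment of $\mathcal{H}_d^-(s,B)$ via the reflection $\mathbf{v}\mapsto-\mathbf{v}$ merely spells out what the paper leaves implicit in its definition of that set.
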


\begin{proof}
  The assertion follows directly from Lemma \ref{halmaz} together with the fact that $(x_0,\dots,x_d) \in \mathcal{H}_d^+(s)$ if and only if $(Bx_0,\dots,Bx_d) \in \mathcal{H}_d^+(s,B)$.
\end{proof}

The basic ingredient of the proof of Theorem \ref{main} is the following result of Davenport.

\begin{lemma}[{\cite[Theorem]{D1}}]
\label{dave:thm}
Let $\mathcal{R}$ be a closed bounded region in the $n$ dimensional space $\R^n$
and let $\mathrm{N}(\mathcal{R})$ and $\lambda(\mathcal{R})$ denote the number of points
with integral coordinates in $\mathcal{R}$ and the volume of $\mathcal{R}$,
respectively. Suppose that:
\begin{itemize}
\item Any line parallel to one of the $n$ coordinate axes intersects
  $\mathcal{R}$ in a set of points which, if not empty, consists of at most $h$
  intervals.
\item The same is true (with $m$ in place of $n$) for any of the $m$
  dimensional regions obtained by projecting $\mathcal{R}$ on one of the
  coordinate spaces defined by equating a selection of $n-m$ of the
  coordinates to zero; and this condition is satisfied for all $m$
  from $1$ to $n-1$.
\end{itemize}
Then
\[
\lvert\mathrm{N}(\mathcal{R})-\lambda(\mathcal{R})\rvert\leq\sum_{m=0}^{n-1}h^{n-m}V_m,
\]
where $V_m$ is the sum of the $m$ dimensional volumes of the
projections of $\mathcal{R}$ on the various coordinate spaces obtained by
equating any $n-m$ coordinates to zero, and $V_0=1$ by convention.
\end{lemma}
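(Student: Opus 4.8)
The plan is to prove the estimate by induction on the dimension $n$, fibring $\mathcal{R}$ over its last coordinate. For $n=1$ the region $\mathcal{R}$ is a union of at most $h$ intervals on the line, and since the number of integers in an interval differs from its length by at most $1$, we obtain $\lvert \mathrm{N}(\mathcal{R})-\lambda(\mathcal{R})\rvert\le h=h^{1}V_0$, which is exactly the asserted bound (recall $V_0=1$ by convention). For the inductive step I write a point of $\R^n$ as $(\mathbf{x},t)$ with $\mathbf{x}\in\R^{n-1}$, and for $t\in\R$ let $\mathcal{R}_t=\{\mathbf{x}:(\mathbf{x},t)\in\mathcal{R}\}$ be the corresponding slice. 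Then $\mathrm{N}(\mathcal{R})=\sum_{k\in\Z}\mathrm{N}(\mathcal{R}_k)$, while by Fubini $\lambda(\mathcal{R})=\int_\R g(t)\,dt$ with $g(t)=\lambda_{n-1}(\mathcal{R}_t)$.

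The first point to settle is that each slice $\mathcal{R}_k$ again satisfies the hypotheses of the lemma with the same $h$, so that the inductive hypothesis applies to it and controls $\lvert\mathrm{N}(\mathcal{R}_k)-g(k)\rvert$. Indeed, the projection of $\mathcal{R}_k$ onto a coordinate set $I$ is the $t=k$ slice of the projection of $\mathcal{R}$ onto $I\cup\{n\}$, so a line parallel to a coordinate axis meets it in at most $h$ intervals by the interval hypothesis applied to that latter projection. Here it is essential that the hypothesis is assumed for all coordinate projections of $\mathcal{R}$, not merely for $\mathcal{R}$ itself.

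The second ingredient is a one-dimensional comparison: if $\mathcal{S}\subseteq\R^{m+1}$ satisfies the interval condition in the direction of its last axis, and $\phi(t)=\lambda_m(\mathcal{S}_t)$, then
$$\Bigl\lvert\sum_{k\in\Z}\phi(k)-\int_\R\phi(t)\,dt\Bigr\rvert\le h\,\lambda_m(\pi(\mathcal{S})),$$
where $\pi$ forgets the last coordinate. This follows by fibring $\mathcal{S}$ the other way, over its first $m$ coordinates: above a base point the fibre is a union of at most $h$ intervals, so the number of integers on it differs from its length by at most $h$, and integrating this pointwise estimate over $\pi(\mathcal{S})$ gives the claim. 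Applied with $\mathcal{S}=\mathcal{R}$ this bounds $\lvert\sum_k g(k)-\lambda(\mathcal{R})\rvert$ by $h\,\lambda_{n-1}(\pi(\mathcal{R}))$; applied with $\mathcal{S}$ equal to the projection of $\mathcal{R}$ onto a coordinate set $I\cup\{n\}$, it converts each slice–projection sum $\sum_k\lambda_m(\mathrm{proj}_I\mathcal{R}_k)$ arising from the inductive error terms into a genuine $(m+1)$-dimensional projection volume of $\mathcal{R}$ plus an $h$-fold $m$-dimensional one.

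Assembling the pieces, I would write $\mathrm{N}(\mathcal{R})-\lambda(\mathcal{R})$ as $\bigl(\sum_k g(k)-\int g\bigr)+\sum_k\bigl(\mathrm{N}(\mathcal{R}_k)-g(k)\bigr)$, substitute the inductive bound and the comparison estimate, and match each resulting projection of $\mathcal{R}$, according to whether or not it involves the last coordinate, with exactly one term $h^{n-m}\lambda_m(\cdot)$ of the target bound. I expect the main obstacle to be precisely this bookkeeping: one must verify that every $m$-dimensional coordinate projection of $\mathcal{R}$ is produced once and with the correct power $h^{n-m}$ — the projections not involving the last axis come from the $h$-fold remainder in the comparison step, those involving it from the genuine projection volumes — so that the separate contributions telescope into $\sum_{m=0}^{n-1}h^{n-m}V_m$ rather than overshooting it. Checking that the powers of $h$ and the dimensions line up, and that the auxiliary projections used in the comparison step themselves inherit the interval hypothesis, is the delicate part; the measure-theoretic steps (Fubini and the interval count) are routine.
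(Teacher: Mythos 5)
Your argument is correct, but there is nothing in the paper to compare it against: the authors do not prove this lemma at all --- it is quoted verbatim from Davenport \cite{D1} and used as a black box (they even appeal to ``the remark after the proof of the Theorem of \cite{D1}'' when verifying its hypotheses later). What you have reconstructed is essentially Davenport's original proof: induction on $n$, slicing $\mathcal{R}$ along the hyperplanes $x_n=k$, $k\in\Z$, applying the inductive hypothesis to the slices (which, as you correctly note, inherit the interval condition with the same $h$ only because the hypothesis is imposed on \emph{all} coordinate projections of $\mathcal{R}$, not just on $\mathcal{R}$), and comparing $\sum_k\lambda_{n-1}(\mathcal{R}_k)$ with $\lambda(\mathcal{R})$ by the fibrewise one-dimensional count. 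Your worry about the bookkeeping is unfounded; it closes up exactly as you predict. A projection $\mathrm{proj}_I\mathcal{R}$ with $n\notin I$ and $|I|=m\le n-2$ picks up the factor $h\cdot h^{n-1-m}=h^{n-m}$ from the $h$-fold remainder in your comparison step; the projection onto the first $n-1$ coordinates gets the factor $h=h^{n-(n-1)}$ from the main comparison applied to $\mathcal{R}$ itself; a projection with $n\in I$, $|I|=m+1$, occurs exactly once as a genuine $(m+1)$-dimensional volume with coefficient $h^{n-1-m}=h^{n-(m+1)}$; and the term $h^nV_0$ arises from counting the integers $k$ with $\mathcal{R}_k\ne\emptyset$, i.e.\ from your comparison lemma applied to the one-dimensional projection onto the $x_n$-axis. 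Thus each of the $2^n-1$ proper coordinate projections is produced exactly once with the stated power of $h$, and the bound $\sum_{m=0}^{n-1}h^{n-m}V_m$ comes out on the nose, with no overshooting.
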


Now we can give the proof of our main result.

\begin{proof}[Proof of Theorem \ref{main}]
For any $B>0$ we have
$$
\mathcal{H}_d(s,B) = \mathcal{H}_d^+(s,B) \cup \mathcal{H}_d^-(s,B) \cup (\{0\}\times [-B,B]^d).
$$
Thus, by Lemma \ref{halmazc} we get
$$
 \lambda(\mathcal{H}_d(s,B)) = 2 \lambda(\mathcal{H}_d^+(s,B)) = 2 \lambda(\mathcal{H}_d^+(s)) B^{d+1}.
$$
We use Lemma \ref{dave:thm} with the choice $\mathcal R = \mathcal{H}_d(s,B)$ and $n=d+1$. Clearly, $\mathcal{D}_d^*(s,B) = \mathcal{H}_d(s,B) \cap \Z^{d+1}$, hence
$$
\mathrm{N}(\mathcal{R}) = D_d^*(s,B).
$$
Thus to apply Lemma \ref{dave:thm} we have to ensure that its assumptions hold for $\mathcal{H}_d(s,B)$. First of all, $\mathcal{H}_d(s,B)$ is a bounded set because it lies in the box $[-B,B]^{d+1}$. By Lemma \ref{halmazc} our set $\mathcal{H}_d(s,B)$ is Lebesgue measurable and its boundary is the union of finitely many algebraic surfaces. Thus by the remark after the proof of the Theorem of \cite{D1} the assumptions of Lemma \ref{dave:thm} are satisfied. Thus
$$
|D_d^*(s,B) - 2 \lambda(\mathcal{H}_d^+(s,B))| \le \sum_{m=0}^{d} h^{d+1-m} V_m,
$$
where $h$ is the maximal number of intervals obtained when we intersect $\mathcal{H}_d(s,B)$ with a line parallel to one of the coordinate axes.

Such a line $\ell$ admits a  parametrization of the form $(x_0+ty_0,\dots,x_d+ty_d)$, where $(x_0,\dots,x_d)$, $(y_0,\dots,y_d)\in \R^{d+1}$ are fixed and $t$ runs through $\R$. Inserting this parametrization into the equation $D=0$, the left hand side becomes a non-zero polynomial of degree at most $d(d+1)$ in $t$. Thus $\ell$ intersects the hypersurface $D=0$ in at most $d(d+1)$ points, which partition $\ell$ into at most $d(d+1)+1$ intervals, i.e. $h\le d(d+1)+1$.

As $V_m$ is the $m$-dimensional volume of a subset of $[-B,B]^m$, we have $V_m =O(B^m)$. This implies
$$
|D_d^*(s,B) - 2 \lambda(\mathcal{H}_d^+(s,B))| \le \sum_{m=0}^{d} (d(d+1)+1)^{d+1-m} O(B^m) = O(B^{d}),
$$
and our theorem is proved.
\end{proof}

Now we give the proof of our second theorem.

\begin{proof}[Proof of Theorem \ref{thm:2}]
	The first statement concerning the formula given for $\lambda(\mathcal{H}^*_d(s,B))$ follows by a simple calculation from Theorem 2.1. of \cite{AkiPet}.
	
	To prove the formula for $\lambda(\mathcal H_d^+(s,B))$ we start from
	$$\lambda(\mathcal H_d^+(s,B))=\int\limits_{\mathcal H_d^+(s,B)}1\ dp_0\ldots dp_d.$$
	We apply the substitution
	$$p_d=Bq_d,\hspace{3mm}p_i=Bq_dq_i\hspace{3mm}(i=0,\ldots,d-1).$$
	Observe that the determinant of its Jacobian is $B^{d+1}q_d^d$. Thus we have
	$$\lambda(\mathcal H_d^+(s,B))=B^{d+1}\int\limits_{A} q_d^d\ dq_0\ldots dq_d,$$
	where
	$$A=\{(q_0, \dots, q_{d-1},q_{d})\in{\mathbb R}^{d+1}\ :\ X^d+q_{d-1}X^{d-1}+\dots+q_1X+q_0$$
	$$
	\text{has signature}\ s\ \text{and}\
	0<q_d\leq 1,\ -\frac{1}{q_d}\leq q_i\leq \frac{1}{q_d}\ (i=0,\dots,d-1)\}.$$
	Here we used the trivial fact that the signatures of the polynomials
	$$X^d+q_{d-1}X^{d-1}+\dots+q_1X+q_0$$
	and
	$$Bq_dX^d+Bq_dq_{d-1}X^{d-1}+\dots+Bq_dq_1X+Bq_dq_0$$
	are the same.
	Putting everything together, we have
	$$\lambda_{d+1}(\mathcal H_d^+(s,B))=B^{d+1} \int\limits_0^1 q_d^d \lambda_d\left(\mathcal H_d^*\left(s,\frac{1}{q_d}\right)\right)\ dq_d$$
	which proves the theorem.
\end{proof}

\section{Numerical results}

In this section we give some numerical data regarding $\lambda(\mathcal{H}^*_d(s))$ and $\lambda(\mathcal{H}^+_d(s))$. We can calculate the precise values of $\lambda(\mathcal{H}^*_d(s))$ only for $d=2,3$, and of $\lambda(\mathcal{H}^+_d(s))$ only for $d=2$. Evaluating the integrals appearing in Theorem \ref{thm:2} we obtain
$$
\lambda(\mathcal{H}^*_2(0))=\frac{13}{6}=2.1667,\ \lambda(\mathcal{H}^*_2(1))=\frac{11}{6}=1.8333,
$$
$$
\lambda(\mathcal{H}^*_3(0))=\frac{766}{1215}+\frac{\log (3)}{6}=0.8136,\ \lambda(\mathcal{H}^*_3(1))=\frac{8954}{1215}-\frac{\log (3)}{6} =7.1865,
$$
and
$$
\lambda(\mathcal{H}^+_2(1))=\frac{31}{18}-\frac{1}{3} \log (2)=1.4912,\
\lambda(\mathcal{H}^+_2(0))=\frac{41}{18}+\frac{1}{3} \log (2) = 2.5088.
$$

Here and later on, to perform our calculations we used the program package Mathematica, and the values are always given with four digit precision.

Observe that $\lambda(\mathcal{H}^*_2(s))$ $(s=0,1)$ are rational, but $\lambda(\mathcal{H}^+_2(s))$ and $\lambda(\mathcal{H}^*_3(s))$ $(s=0,1)$ are transcendental. We think that $\lambda(\mathcal{H}^+_d(s))$ and $\lambda(\mathcal{H}^*_d(s))$ $(s=0,\dots,\lfloor d/2\rfloor)$ are all transcendental for $d\ge 2$ and $d\ge 3$, respectively. In contrast, Akiyama and Peth\H{o} \cite{AkiPet}, Theorem 5.1., proved that the analogous values $v_d^{(s)}$ are rational for all $d,s$.

For larger values of $d$ we were unable to evaluate the integrals appearing in Theorem \ref{thm:2}. The reason is that when we split up the original domain into subdomains according to the signature, the boundary (coming from the discriminant surface) is so complicated that Mathematica is not able to handle the situation. So to get some numerical data, we needed another approach. We used the Monte Carlo method to get approximate results both for $\lambda(\mathcal{H}^\ast_d(s))$ and $\lambda(\mathcal{H}^+_d(s))$ for $2\leq d\leq 15$.
The main principle behind the method is that we choose a 'large' number of randomly generated polynomials inside the basic region, and check their signatures. Then the ratio of polynomials having a prescribed signature $s$ gives an approximation of the volume. More precisely, we do the following.
\begin{enumerate}
	\item For approximating $\lambda(\mathcal{H}^*_d(s))$, we randomly choose (using uniform distribution) a vector from $[-1,1]^d$, say $(p_0,\ldots,p_{d-1})$. For approximating $\lambda(\mathcal{H}^+_d(s,1))$ we do the same, but now the vector is in $[0,1]\times [-1,1]^{d}$.
	\item We construct the polynomial $P(X)=X^d+p_{d-1}X^{d-1}+\cdots +p_1X+p_0$ or $P(X)=p_dX^d+p_{d-1}X^{d-1}+\cdots +p_1X+p_0$, respectively.
	\item We determine the signature of $P(X)$.
	\item After a 'large' number of iterations (in our case we used $200,000$ loops for each $d$) we calculate the ratio of the number of polynomials with a given signature and the number of iteration, which is the approximate value of $\lambda(\mathcal{H}^*_d(s))$ or $\lambda(\mathcal{H}^+_d(s))$, respectively.
\end{enumerate}

In the following tables we give the results of the above method for $2\leq d\leq 15$, that is, the approximate values of $\lambda(\mathcal{H}^*_d(s))$ and $\lambda(\mathcal{H}^+_d(s))$, respectively (for all possible values of $s$). We note that comparing the approximate values with the precise values given above for $d=2,3$ and $d=2$, respectively, we see that in those cases the errors are around $1\%$. Thus we expect that the other approximate values are rather close to the actual data, as well.

\begin{table}[!ht]
	\caption{The approximated values of $\lambda(\mathcal{H}^*_d(s))$ for $2\leq d\le 15$}
	\makebox[\linewidth][c]{$\begin{array}{|c|c|c|c|c|c|c|c|c|}
	\hline
	d/s&0&1&2&3&4&5&6&7\\\hline
	2&2.1652&1.8348&-&-&-&-&-&-\\\hline
	3&0.8192&7.1808&-&-&-&-&-&-\\\hline
	4&0.0880&10.2833&5.6286&-&-&-&-&-\\\hline
	5&0.0021&6.3378&25.6602&-&-&-&-&-\\\hline
	6&0.0003&1.6330&43.9437&18.4230&-&-&-&-\\\hline
	7&0.0000&0.1542&34.128&93.7178&-&-&-&-\\\hline
	8&0.0000&0.0051&12.4442&179.8340&63.7171&-&-&-\\\hline
	9&0.0000&0.0000&2.0838&163.8780&346.0380&-&-&-\\\hline
	10&0.0000&0.0000&0.1434&72.8678&728.5040&222.4840&-&-\\\hline
	11&0.0000&0.0000&0.0102&16.0154&744.4378&1287.5366&-&-\\\hline
	12&0.0000&0.0000&0.0000&1.6589&382.8122&2909.0406&802.4883&-\\\hline
	13&0.0000&0.0000&0.0000&0.0410&98.0173&3227.6070&4866.3347&-\\\hline
	14&0.0000&0.0000&0.0000&0.0000&10.6496&1847.4598&11599.2986&2926.5920\\\hline
	15&0.0000&0.0000&0.0000&0.0000&0.8192&574.4230&13800.5709&18392.1869\\\hline
	\end{array}$}
\end{table}
\newpage
\begin{table}[!ht]
	\caption{The approximated values of $\lambda(\mathcal{H}^+_d(s))$ for $2\le d\le 15$}
	\makebox[\linewidth][c]{$\begin{array}{|c|c|c|c|c|c|c|c|c|}
		\hline
		d/s&0&1&2&3&4&5&6&7\\\hline
		2&2.5054&1.4946&-&-&-&-&-&-\\\hline
		3&1.7540&6.2460&-&-&-&-&-&-\\\hline
		4&0.6301&11.3332&4.0361&-&-&-&-&-\\\hline
		5&0.1061&10.7558&21.1381&-&-&-&-&-\\\hline
		6&0.0128&5.5776&45.8112&12.5984&-&-&-&-\\\hline
		7&0.0013&1.6326&52.2074&74.1587&-&-&-&-\\\hline
		8&0.0000&0.2163&33.6922&180.6090&41.4822&-&-&-\\\hline
		9&0.0000&0.0154&12.4595&232.6550&266.8700&-&-&-\\\hline
		10&0.0000&0.0051&2.6317&171.8940&706.3810&143.0890&-&-\\\hline
		11&0.0000&0.0000&0.3174&74.3629&998.0621&975.2576&-&-\\\hline
		12&0.0000&0.0000&0.0000&18.2886&814.0595&2754.4576&509.1942&-\\\hline
		13&0.0000&0.0000&0.0000&2.6214&400.1792&4165.5501&3263.5493&-\\\hline
		14&0.0000&0.0000&0.0000&0.4096&123.2896&3719.1680&10721.5258&1819.6070\\\hline
		15&0.0000&0.0000&0.0000&0.0000&22.1184&1965.7523&17215.8157&13564.3136\\\hline
		\end{array}$}
\end{table}

The graphs of the functions $\lambda(\mathcal{H}^*_d(s))$, $\lambda(\mathcal{H}^+_d(s))$ and of $v_d^{(s)}$ from \cite{AkiPet} seem to have similar fashion. For small $s$ their values tend rapidly to zero. This was proved for $v_d^{(0)}$ in \cite{AkiPet} Theorem 6.1. and for $v_d^{(1)}$ by Kirschenhofer and Weitzer \cite{KW}.

In the following figures we illustrate our results in a more comprehensive way. On the top of the bars we indicate the values of $s$.
\begin{figure}[!ht]
	\caption{Approximate values of $\lambda(\mathcal{H}^*_d(s))$ for even $d<15$}
	\makebox[\linewidth][c]{
	\includegraphics[scale=0.75]{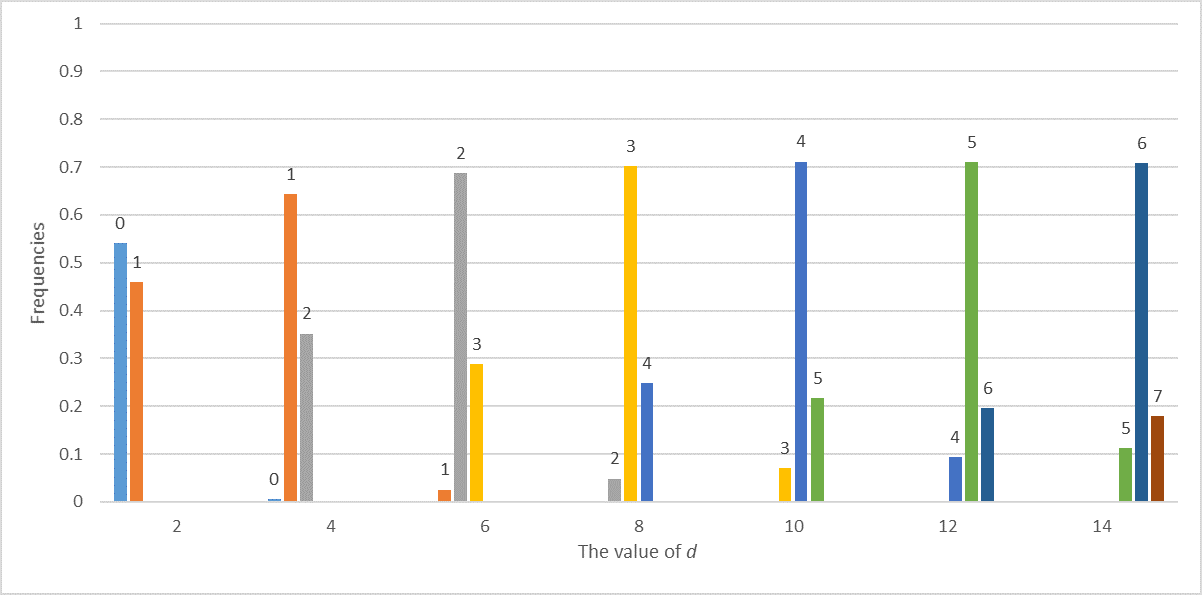}}
\end{figure}
\vfill
\begin{figure}[!ht]
	\caption{Approximate values of $\lambda(\mathcal{H}^*_d(s))$ for odd $d\leq 15$}
	\makebox[\linewidth][c]{
		\includegraphics[scale=0.75]{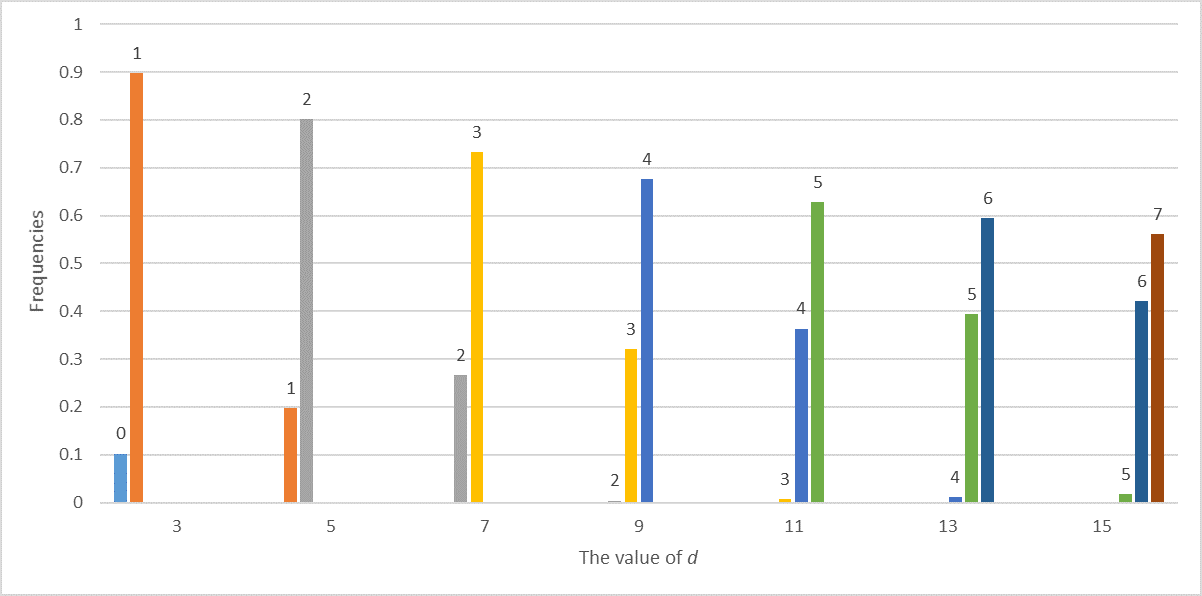}}
\end{figure}
\begin{figure}[!ht]
	\caption{Approximate values of $\lambda(\mathcal{H}^+_d(s))$ for even $d<15$}
	\makebox[\linewidth][c]{
		\includegraphics[scale=0.75]{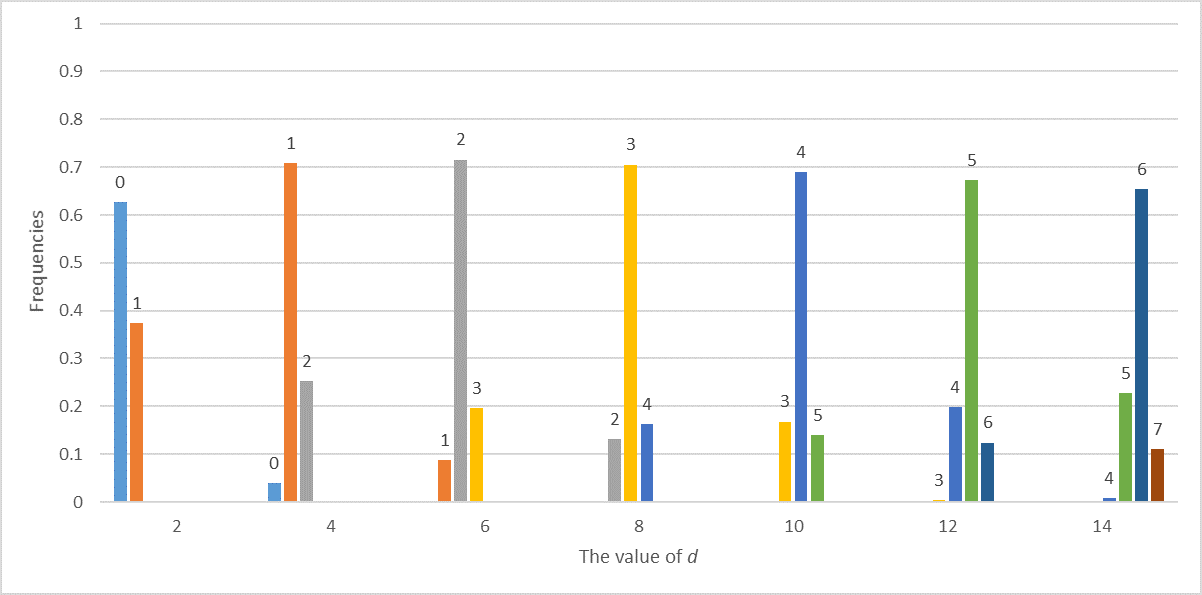}}
\end{figure}
\clearpage
\begin{figure}[pt]
	\caption{Approximate values of $\lambda(\mathcal{H}^+_d(s))$ for odd $d\leq 15$}
	\makebox[\linewidth][c]{
		\includegraphics[scale=0.75]{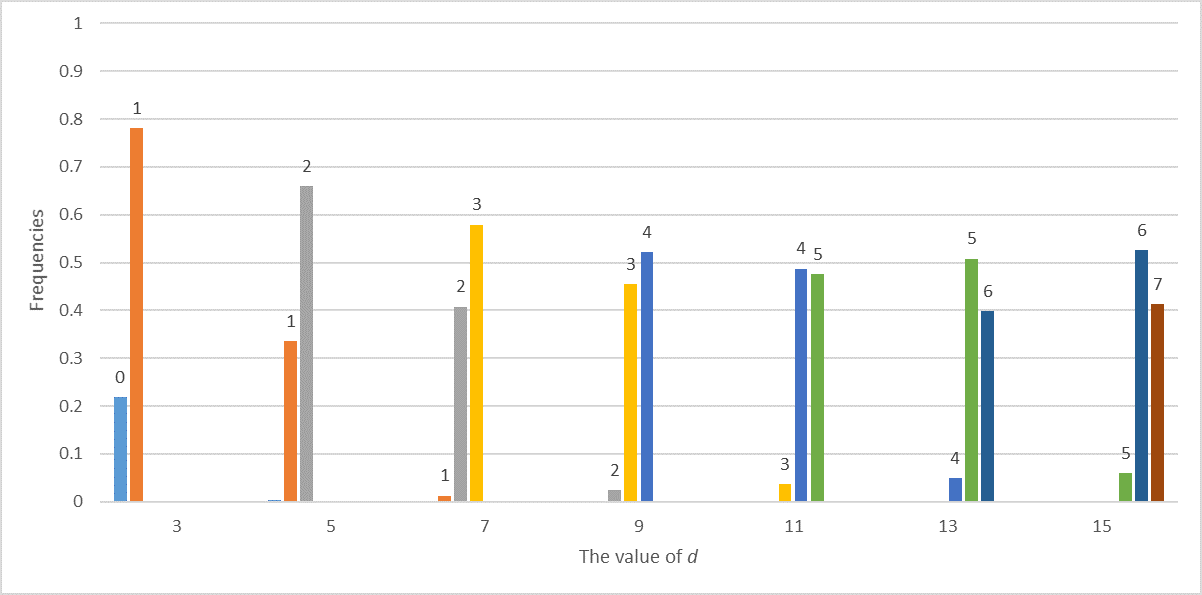}}
\end{figure}
\vfill
\section{Open problems}

In Theorem \ref{main} we proved an asymptotic formula for $D_d^*(s,B)$. It is natural to ask whether a similar formula holds for the number of integer polynomial with bounded height, but with leading coefficient $1$. More formally, denote by $\mathcal D_d(s,B)$ the subset of elements of $\mathcal D_d^*(s,B)$ with $p_d=1$, and denote by $D_d(s,B)$ the size of $\mathcal D_d(s,B)$. Dubickas and Sha \cite{DuSh} proved
\begin{equation}\label{DubSha1}
B^{d+1} \ll D_d^*(s,B) \ll B^{d+1}.
\end{equation}
We expect analogously that
$$
B^{d} \ll D_d(s,B) \ll B^{d}
$$
hold, but we were not able to prove this estimate. Of course, the upper bound follows from the trivial identity $\sum_{s=0}^{[d/2]}  D_d(s,B) = (2\lfloor B\rfloor+1)^d$, the real challenge is to establish the lower bound. One way to achieve this is to prove that $\lambda_d(\mathcal H_d^*(s,B))/B^d >c$ with a positive constant $c$, which is independent of $B$. Unfortunately we were not able to prove this.

Similarly, we could not prove the simpler statement $\lambda_d(H_d(s,B))/B^d >c$ with a positive constant $c$, which is independent of $B$. The main problem seems to be to find efficient construction of integer polynomials with given signature, with leading coefficient $1$, and with large height.

\bigskip

The discriminant hypersurface $S_D$ defined by $D=0$ partitions $\R^d$ into subsets such that the polynomials arising by the mapping $(p_0,\dots,p_{d-1}) \mapsto X^d + p_{d-1}X^{d-1} + \dots+ p_0 $ have the same signature if $(p_0,\dots,p_{d-1})$ runs through the points of a subset. It is natural to ask the topology of these subsets, e.g. whether they are connected. The situation is very simple for $d=2$, when $D = p_1^2 - 4p_0$. Thus $S_D$ is a parabola, which partitions the plain into two subsets.

\end{document}